\def\url@leostyle{%
  \@ifundefined{selectfont}{\def\UrlFont{\sf}}{\def\UrlFont{\small\ttfamily}}}
\numberwithin{equation}{section}
\newtheoremstyle{myDef}%
{} 
{} 
{\itshape} 
{} 
{\bfseries} 
{.} 
{0.5em} 
{} 
\theoremstyle{myDef}
\newtheorem{theorem}{Theorem}[section]
\newtheorem{prop}[theorem]{Proposition}
\newcommand{\fdef}{\textbf} 
\newcommand{\falg}[1]{\texttt{\textsc{#1}}} 
\newcommand{\slope}[2][{z, \x{x}}]{\ensuremath{{#2}\!\left[#1\right]}}
\newcommand{\that}{\ensuremath{\hat{x}}}
\newcommand{\zz}[1]{\ensuremath{\mathbb{#1}}} 
\newcommand{\assign}{\ensuremath{\mathrel{\mathop:}=}} 
\newcommand{\id}{\mathbbm{1}}
\newcommand{\lbar}[1]{\ensuremath{\underline{#1}}}
\newcommand{\ubar}[1]{\ensuremath{\overline{#1}}}
\newcommand{\intval}[1]{\ensuremath{\mathbf{#1}}}
\newcommand{\til}[1]{\ensuremath{\widetilde{#1}}}
\newcommand{\absv}[1]{\ensuremath{\left\vert{#1}\right\vert}}
\DeclareMathOperator{\interior}{int}
\begin{document}
  \title{\textbf{Exclusion regions for parameter-dependent systems of equations}}
  \author{Bettina Ponleitner, Hermann Schichl}
  \date{Faculty of Mathematics, University of Vienna}
  \maketitle

  \section*{Abstract}
This paper presents a new algorithm based on interval methods for rigorously constructing inner estimates of feasible parameter regions together with enclosures of the solution set for parameter-dependent systems of nonlinear equations in low (parameter) dimensions.  The proposed method allows to explicitly construct feasible parameter sets around a regular parameter value, and to rigorously enclose a particular solution curve (resp.~manifold) by a union of inclusion regions, simultaneously. The method is based on the calculation of inclusion and exclusion regions for zeros of square nonlinear systems of equations. Starting from an approximate solution at a fixed set $p$ of parameters, the new method provides an algorithmic concept on how to construct a box $\intval{s}$ around $p$ such that for each element $s\in \intval{s}$ in the box the existence of a solution can be proved within certain error bounds.

\section{Introduction}
In this paper we consider parameter-dependent nonlinear systems of equations
\begin{equation}\label{eq:H}
H(x,s) = 0,
\end{equation}
with solutions $x = x(s) \in \zz{R}^n$ depending on a set of parameters $s\in\zz{R}^p$. Thereby, we assume $H\colon X\times S \subseteq \zz{R}^n\times \zz{R}^p\to \zz{R}^n$ to be differentiable with Lipschitz continuous derivative.

Branch and bound methods for finding all zeros of a (square) nonlinear system of equations in a box frequently have the difficulty that subboxes containing no solution cannot be eliminated if there is a nearby zero outside the box. This results in the so-called \emph{cluster effect}, i.e., the creation of many small boxes by repeated splitting, whose processing may dominate the total work spent on the global search. \citet{schichl2005ex} presented a method how to reduce the cluster effect for nonlinear $n\times n$-systems of equations by computing so-called \emph{inclusion} and \emph{exclusion regions} around an approximate zero with the property, that a true solution lies in the inclusion region and no other solution in the corresponding exclusion region, which thus can be safely discarded. 

 In the parameter-dependent case, it would be convenient to show the existence of such inclusion regions for a whole set of parameter values in order to rigorously identify feasible parameter boxes $\intval{s}$ where for all $s\in \intval{s}$ solutions $x(s)$ of \eqref{eq:H} exist. Thus, we extend the method from \citet{schichl2005ex} to this problem class, and show how to compute parameter boxes $\intval{s}\subseteq S$ such that for each parameter set $s \in \intval{s}$ the existence of a solution  $x(s) \in \intval{x}\subseteq X$ of \eqref{eq:H} within a narrow inclusion box can be guaranteed.

     The procedure for computing such a feasible parameter box $\intval{s}$ consists of three main steps:
   \begin{inparaenum}[(i)]
     \item solve \eqref{eq:H} for a fixed parameter $p\in\intval{s}$ and compute a pair of  inclusion and exclusion regions for a corresponding approximate zero $z\approx x(p)$ as described in \cite{schichl2005ex},   \label{i}
     \item consider an approximation function $\that(s)\colon S\to X$ for the solution curve, and 
     \item extend the estimates and bounds from step (\ref{i}) using slope forms in order to calculate  a feasible parameter box $\intval{s}$ around $p$ such that for all $s\in\intval{s}$ the existence of a solution $x^*(s)$ of \eqref{eq:H} can be proved. 
   \end{inparaenum}
   
      \paragraph{Other known approaches.} Parameter-dependent systems of equations can be solved by continuation methods (e.g., \cite{allgower1985, allgower1997}) which trace a particular solution curve or a solution manifold, if $p>1$ in \eqref{eq:H}. Another approach for parametric polynomial systems is to use Gr\"obner bases (e.g., \cite{kapur1995, montes2010}). \citet[Thm.~5.1.3]{neumaier1991} formulated a semilocal version of the implicit function theorem and provided a tool \cite[Prop.~5.5.2]{neumaier1991} for constructing an enclosure of the solution set of \eqref{eq:H} with parameters varying in narrow intervals. Furthermore, \citet{neumaier1989} performed a rigorous sensitivity analysis for parameter-dependent systems of equations and proved a quadratic approximation property of a slope based enclosure. \citet{kolev2001} proposed an iterative method to construct a linear interval enclosure of the solution set of \eqref{eq:H} over a given parameter interval. \citet{goldsztejn2003} used a weak version of the parametric Miranda-theorem (see \cite[Thm.~5.3.7]{neumaier1991}) to verify the existence of solutions over a given parameter interval and to compute a reliable inner estimate of the feasible parameter region. Independently from the work of \citeauthor{goldsztejn2003}, the authors recently pursued a similar approach and propose some tools utilizing Miranda's theorem and centered forms for rigorously solving parameter-dependent systems of equations \cite{ponleitnerSchichl2018mir}.

   \paragraph{Outline.} The paper is organized as follows: in Section \ref{sec:slopeForms} we review some central results about rigorously computing solutions of square nonlinear systems of equations. Additionally, we summarize basic definitions and known results about slope forms, which will be an important tool when extending the exlusion region-concept from \cite{schichl2005ex} to the parameter-dependent case. In Section \ref{sec:fixs0} we will outline the method introduced by \citet{schichl2005ex} as it is the starting point for the new method. In Section \ref{sec:paramProb} we will then state and prove the main results of this paper and describe how to extend the inclusion/exclusion-region concept to the parameter-dependent case. In Section \ref{sec:examples} the new method is demonstrated on a numerical example. Finally, in Section \ref{sec:discussion} a summary as well as an outlook of future work is given.
\paragraph{Notation.}
Throughout the paper, we will use the following notation: For a matrix $A\in \zz{R}^{n\times m}$ we denote by $A_{: K}$ the $n\times k$ submatrix consisting of $k$ columns with indices in $K\subseteq \lbrace{1,\dots,m}\rbrace$, and, similarly $A_{K:}$ denotes the $k\times m$ submatrix with row-indices in $K\subseteq \lbrace 1,\dots,n\rbrace$. Let $F\colon \zz{R}^m\to\zz{R}^n$. If $y = (x,s)^T\in \zz{R}^m$ is a partition of $x$ with $x=y_I$, $s=y_J$, where $I$, $J$ are index sets with $I\cap J=\emptyset$ and $I\cup J=\lbrace{1,\dots,m}\rbrace$, the Jacobian of $F$ with respect to $x$ is
  \begin{equation*}
    F'_x(y)= \frac{\partial F}{\partial x}(y) = \left(\frac{\partial F}{\partial y}(y)\right)_{:I}.
  \end{equation*}
 A slope of $F$ with center $z$ at $y$ is written as $\slope[z,y]{F}$, a slope with respect to $x$ then is
 \begin{equation*}
  \slope[z,y]{F_x} = (\slope[z,y]{F})_{:I}.
 \end{equation*}
    Since second order slopes (resp.~first order slopes of the Jacobian) are third order tensors, we use the following multiplication rules (see \cite{schichl2014exOpt}) for a 3--tensor $\mathcal{T} \in \zz{R}^{n \times m\times r}$, a vector $v \in \zz{R}^r$, and matrices $C\in \zz{R}^{s\times n}$, $B\in \zz{R}^{r \times s}$:
   \begin{alignat}{3}\label{eq:tensorRules}
     \left(\mathcal{T}v\right)_{ij}  = \sum_{k=1}^r \mathcal{T}_{ijk}v_k,  &&\quad\quad
     \left(C\mathcal{T} \right)_{ijk} = \sum_{l=1}^n C_{il}\mathcal{T}_{ljk}, &&\quad\quad
     \left(\mathcal{T}B\right)_{ijk}  = \sum_{l=1}^r \mathcal{T}_{ijl}B_{lk}.
   \end{alignat}
   
   Additionally, we define for a vector $v\in\zz{R}^n$ and a 3--tensor $\mathcal{T}\in \zz{R}^{n\times n\times n}$ the product
   \begin{equation*}
    v^T\mathcal{T}\;v=(\mathcal{T}v)\; v,  \quad \text{i.e., with }\quad  \left(v^T\mathcal{T}\;v\right)_i = \sum_{j=1}^n \sum_{k=1}^n v_k \mathcal{T}_{ijk}\,v_j
   \end{equation*}
 
\section{Preliminaries and known results} \label{sec:slopeForms}
   Consider a twice continuously differentiable function $F\colon D \subseteq \zz{R}^n \to \zz{R}^n$. We can always write (see \cite{schichl2005ex})
   \begin{equation}\label{eq:slope}
     F(x) - F(z) = \slope[z,x]{F}\;(x-z),
   \end{equation}
   for any two points $z$, $x\in D$ and a suitable matrix $\slope[z,x]{F}\in \zz{R}^{n\times n}$, a so-called \fdef{slope matrix} for $F$ with center $z$ at $x$. While in the multivariate case, the slope matrix is not uniquely determined, we always have by differentiability
   \begin{equation*}
     \slope[z,z]{F} = F'(z).
   \end{equation*}
   Assuming that the slope matrix is continuously differentiable in both points, we can write similarly
   \begin{align}
     \slope[z, x]{F} &= \slope[z,z']{F}+\slope[z, z', x]{F}(x_k-z'_k)\label{eq:FslopeEst1}\\
     \intertext{which simplifies for $z=z'$ to}
     \slope[z, x]{F} &=F'(z) + \slope[z, z, x]{F}(x_k-z_k),\label{eq:FslopeEst2}
   \end{align}
   where the \fdef{second order slopes} $\slope[z,z',x]{F}$, $\slope[z,z,x]{F}$,  respectively, are continuous in $z$, $z'$ and $x$. If $F$ is quadratic, the first order slopes are linear, and thus, the second order slope matrices are constant. 
   Let $z$ be a fixed center in the domain of $F$. Having a slope $\slope[z,x]{F}$ for all $x \in \intval{x}$ we get
   \begin{equation}\label{eq:slopeInclusion}
     F(\intval{x}) \subseteq F(z) + \slope[z,\intval{x}]{F}\left(\intval{x}- z\right),
   \end{equation}
   and, analogously,
   \begin{equation}\label{eq:slopeInclusion2}
     F(\intval{x}) \subseteq F(z) + \left(F'(z) + \slope[z, z, \intval{x}]{F}(\intval{x}_k-z_k)\right) \left(\intval{x}- z\right).
   \end{equation}
    Hence, the \fdef{first} and \fdef{second order slope forms} given in \eqref{eq:slopeInclusion} and \eqref{eq:slopeInclusion2}, respectively, provide enclosures for the true range of the function $F$ over an interval $\intval{x}$.  There are recursive procedures to calculate slopes, given $x$ and $z$ (see \cite{krawczyk1985interval, rump1996, kolev1997, schichl2005interval}). A \falg{Matlab} implementation for first order slopes is in \falg{Intlab} \cite{rump1999}; also, the \falg{Coconut} environement \cite{schichl2012algorithmic} provides algorithms.
   
   Similarly to derivatives, slopes obey a sort of chain rule. Let $F \colon \zz{R}^m\to \zz{R}^p$, $g\colon \zz{R}^n\to \zz{R}^m$. Then we have
   \begin{align}\label{eq:chainRule}
     \begin{split}
       (F\circ g)(x) & = (F\circ g)(z) + \slope[z,x]{(F\circ g)}(x-z)\\
       & = F(g(z)) + \slope[g(z), g(x)]{F}\; \slope[z,x]{g}(x-z),
     \end{split}
   \end{align}
i.e., $\slope[g(z),\,g(x)]{F}\;\slope[z,x]{g}$ is a slope matrix for $F\circ g$.

 Exclusion regions for $n\times n$-systems are usually constructed using uniqueness tests based on the Krawczyk operator (see \cite{neumaier1991}) or the Kantorovich theorem (see \cite{kantorovich1952, ortega2000, deuflhard1979}), which both provide existence and uniqueness regions for zeros of square systems of equations. \citet{kahan1968} used the Krawczyk operator to make existence statements. An important advantage of the Krawczyk operator is that it only needs first order information. Together with later improvements about slopes, his result is contained in the following statement.

   \begin{theorem}[Kahan]\label{thm:kraw}
     Let $F\colon \zz{R}^n\to\zz{R}^n$ be as before and let $z\in\intval{z}\subseteq\intval{x}$. If there is a matrix $C\in\zz{R}^{n\times n}$ such that the Krawczyk operator
     \begin{equation}\label{eq:krawall}
       \mathrm{K}({\intval{z}, \intval{x}}) \assign z-CF(z)-\left(\slope[\intval{z}, \intval{x}]{CF}-\id\right)(\intval{x}-z)
     \end{equation}
     satisfies $\mathrm{K}({\intval{z}, \intval{x}}) \subseteq \intval{x}$, then $\intval{x}$ contains a zero of $F$. Moreover, if $\mathrm{K}({\intval{x}, \intval{x}})\subseteq \interior (\intval{x})$, then $\intval{x}$ contains a unique zero. 
   \end{theorem}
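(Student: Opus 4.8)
The plan is to recast the zero-finding problem for $F$ as a fixed-point problem for the Krawczyk map, and then to read off existence from Brouwer's theorem and uniqueness from a contraction argument. Set $g(x) \assign x - CF(x)$, so that $x^*$ is a fixed point of $g$ exactly when $CF(x^*)=0$, and, provided $C$ is nonsingular, exactly when $F(x^*)=0$. First I would establish the algebraic bridge between $g$ and $\mathrm{K}$: fixing the center $z\in\intval{z}$ and using the defining slope relation $CF(x)-CF(z)=\slope[z,x]{CF}\,(x-z)$ from \eqref{eq:slope}, a direct expansion gives, for every $x\in\intval{x}$,
\begin{equation*}
  g(x) = x - CF(x) = z - CF(z) - \left(\slope[z,x]{CF}-\id\right)(x-z).
\end{equation*}
Since $z\in\intval{z}$ and $x\in\intval{x}$ imply $\slope[z,x]{CF}\in\slope[\intval{z},\intval{x}]{CF}$ and $x-z\in\intval{x}-z$, the right-hand side lies in $\mathrm{K}(\intval{z},\intval{x})$, so $g(\intval{x})\subseteq \mathrm{K}(\intval{z},\intval{x})$.

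For the existence claim I would invoke the hypothesis $\mathrm{K}(\intval{z},\intval{x})\subseteq\intval{x}$ to conclude $g(\intval{x})\subseteq\intval{x}$. As $\intval{x}$ is a nonempty, compact, convex box and $g$ is continuous, Brouwer's fixed-point theorem yields some $x^*\in\intval{x}$ with $g(x^*)=x^*$, i.e.\ $CF(x^*)=0$; with $C$ nonsingular this is the desired zero $F(x^*)=0$.

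For uniqueness I would argue from the interior condition $\mathrm{K}(\intval{x},\intval{x})\subseteq\interior(\intval{x})$. If $x^*,y^*\in\intval{x}$ are both zeros of $F$, then the slope relation with center $x^*$ gives $0 = CF(y^*)-CF(x^*) = \slope[x^*,y^*]{CF}\,(y^*-x^*)$, so the matrix $\slope[x^*,y^*]{CF}\in\slope[\intval{x},\intval{x}]{CF}$ annihilates $y^*-x^*$. The task is then to show that every matrix in the interval slope $\slope[\intval{x},\intval{x}]{CF}$ is nonsingular. This is exactly what strict interior containment buys: writing $\mathrm{K}(\intval{x},\intval{x})$ in midpoint--radius form with $z=\operatorname{mid}(\intval{x})$, its radius is $|\id-\slope[\intval{x},\intval{x}]{CF}|\,\operatorname{rad}(\intval{x})$, and $\mathrm{K}(\intval{x},\intval{x})\subseteq\interior(\intval{x})$ forces $|\id-\slope[\intval{x},\intval{x}]{CF}|\,\operatorname{rad}(\intval{x})<\operatorname{rad}(\intval{x})$ componentwise; since $\operatorname{rad}(\intval{x})>0$, Perron--Frobenius gives $\rho\!\left(|\id-\slope[\intval{x},\intval{x}]{CF}|\right)<1$. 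Consequently every $M\in\slope[\intval{x},\intval{x}]{CF}$ satisfies $\rho(\id-M)\le\rho\!\left(|\id-\slope[\intval{x},\intval{x}]{CF}|\right)<1$, so $1$ is not an eigenvalue of $\id-M$ and each such $M$ is nonsingular. In particular $\slope[x^*,y^*]{CF}$ is nonsingular, forcing $y^*=x^*$. The same nonsingularity applied to $M=(CF)'(z)=C\,F'(z)\in\slope[\intval{x},\intval{x}]{CF}$ certifies that $C$ itself is nonsingular, closing the gap left in the existence step.

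I expect the main obstacle to be the uniqueness half: turning the purely geometric statement ``$\mathrm{K}$ sits strictly inside $\intval{x}$'' into the quantitative spectral bound $\rho\!\left(|\id-\slope[\intval{x},\intval{x}]{CF}|\right)<1$ via the midpoint--radius comparison, and then transferring nonsingularity from the interval hull down to the single unknown slope $\slope[x^*,y^*]{CF}$. The existence half is comparatively routine once the inclusion $g(x)\in\mathrm{K}(\intval{z},\intval{x})$ is in place, the only delicate point being the passage from $CF(x^*)=0$ to $F(x^*)=0$, which is precisely why nonsingularity of $C$ must be secured.
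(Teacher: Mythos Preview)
The paper does not prove Theorem~\ref{thm:kraw}; it is quoted as a known result attributed to Kahan (with the slope refinement credited to later authors) and then used as a black box in the proof of Theorem~\ref{thm:pexReg}. So there is no ``paper's own proof'' to compare against. Your outline is the classical argument --- Brouwer for existence, regularity of the slope enclosure for uniqueness --- and is essentially the right one.

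Two technical points deserve care. First, in the existence half you obtain only $CF(x^*)=0$ and postpone $F(x^*)=0$ until nonsingularity of $C$ is extracted from the \emph{uniqueness} hypothesis. But the theorem claims existence already under the weaker condition $\mathrm{K}(\intval{z},\intval{x})\subseteq\intval{x}$, which by itself does not force $C$ to be invertible (take $C=0$, whence $\mathrm{K}=\intval{x}$ trivially). This is really a defect of the statement as quoted, not of your argument; the usual formulations assume $C$ nonsingular explicitly, and the paper itself always takes $C\approx H'_x(z,p)^{-1}$.

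Second, in the uniqueness half you write ``with $z=\operatorname{mid}(\intval{x})$'' to get the clean radius comparison $\absv{\id-\slope[\intval{x},\intval{x}]{CF}}\operatorname{rad}(\intval{x})<\operatorname{rad}(\intval{x})$. But $z$ is fixed by hypothesis and need not be the midpoint; for an off-centre $z$ the box $\intval{x}-z$ is not symmetric and the term $CF(z)$ does not disappear from the midpoint of $\mathrm{K}$, so the Perron--Frobenius step is not immediate. The conclusion survives, but the bookkeeping has to be redone: either bound the width of $\mathrm{K}(\intval{x},\intval{x})$ directly (it is at least $2\,\absv{\id-\slope[\intval{x},\intval{x}]{CF}}\operatorname{rad}(\intval{x})$ regardless of the centre, since $\pm\operatorname{rad}(\intval{x})\in\intval{x}-z$ need not hold but the full width $\intval{x}-\intval{x}$ does arise once one also varies the centre inside $\intval{z}=\intval{x}$), or recentre the Krawczyk map at the already-found zero $x^*$ where the constant term vanishes.
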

   \citet{neumaier1990} proved that the Krawczyk operator with slopes always provides existence regions which are at least as large as those computed by Kantorovich's theorem. Based on a more detailed analysis of the properties of the Krawczyk operator, \citet{schichl2005ex} provided componentwise and affine invariant existence, uniqueness, and nonexistence regions given a zero or any other point in the search region. More recently, this concept was extended to optimiziation problems; see \citet{schichl2014exOpt}. 

\section{Inclusion/exclusion regions for a fixed parameter}\label{sec:fixs0}
  We consider the nonlinear system of equations \eqref{eq:H} at a fixed parameter value $p$,
   \begin{equation}\label{eq:Ht}
     H(x, p) = 0, \quad H\colon X \subseteq \zz{R}^n \to \zz{R}^n.
   \end{equation}
   Let $z\approx x(p)$ be an approximate solution of \eqref{eq:Ht}, i.e., 
   \begin{equation}\label{eq:approxSol}
    H(z,p) \approx 0.
   \end{equation}
   Our first aim is the verification of a true solution $x^*$of system \eqref{eq:Ht} in a neighbourhood of $z$ by computing an inclusion (resp.~exclusion) region around $z$ as described by \citet{schichl2005ex}. Assuming regularity of the Jacobian $H'_x(z, p)$ we take
   \begin{equation}\label{eq:C}
     C \approx H'_x(z, p)^{-1} \in \zz{R}^n \times \zz{R}^n
   \end{equation}
   as a fixed preconditioning matrix and compute the componentwise bounds 
   \begin{align}\label{eq:bounds0}
     \begin{split}
       \ubar{b} &\geq \absv{CH(z, p)}\\
       B_0&\geq \absv{CH'_x(z, p)-\id}\\
       B(x) &\geq \absv{C\slope[(z, p), (z, p), (x, p)]{H_{xx}}} \\
       \ubar{B}&\geq B(x) \quad \forall \; x \in\; \mathbf{x}\subseteq X,
     \end{split}
   \end{align}
    where the second order slopes $H_{xx}$ are fixed. Throughout the paper, we assume $z \approx x(p)\in \intval{x}$ as fixed center and the bounds from \eqref{eq:bounds0} valid for all $x\in\intval{x}$, where $\intval{x}\subseteq X$ is chosen appropriately (see below). Following \cite[Thm.~4.3]{schichl2005ex}, we choose a suitable vector $0<v\in\zz{R}^n$, which basically determines the scaling of the inclusion/exclusion regions, and set
   \begin{equation}\label{eq:aw}
     w \assign \left(\id -B_0\right)v, \quad a \assign v^T\ubar{B}\;v
   \end{equation}
   Supposing 
   \begin{equation*}
     D_j = w_j^2-4a_j\ubar{b}_j>0
   \end{equation*}
   for all $j=1,\dots,n$, we define 
   \begin{align}\label{eq:lambdaExIn}
     \lambda_j^e&\assign \frac{w_j+\sqrt{D_j}}{2a_j}, &\quad \lambda_j^i &\assign \frac{\ubar{b}_j}{a_j\lambda_j^e},\\ 
     \lambda^e &\assign \min_{j=1,\dots,n}\lambda^{e}_j,&\quad \lambda^i &\assign \max_{j=1,\dots, n} \lambda^{i}_j.\nonumber
     \end{align}
   If $\lambda^e>\lambda^i$, then there is at least one zero $x^*$ of \eqref{eq:Ht} in the inclusion region $\intval{R}_{i}^0$ and the zeros in this region are the only zeros of \eqref{eq:Ht} in the interior of the exclusion region $\intval{R}_{e}^0$ with 
   \begin{align}\label{eq:exR0}
     \begin{split}
       \intval{R}^{i}_0 & \assign [z\!-\lambda^i\,v,\; z\!+\lambda^i\,v]\subseteq \intval{x}\\
       \intval{R}^{e}_0 & \assign [z\!-\lambda^e\,v,\; z\!+\lambda^e\,v]\cap \intval{x}.
     \end{split}
   \end{align}
   In the important special case where $H(x,p)$ is quadratic in $x$, the first order slope $\slope[(z, p), (x,p)]{H}$ is linear in $x$. Hence, all second order slope matrices are constant in $x$. Therefore, the upper bounds $B(x)=B$ are constant as well. Thus, we can set $\ubar{B} = B$ and the estimate from \eqref{eq:bounds0} becomes valid everywhere. Otherwise, an appropriate choice of $\intval{x}\subseteq X$ is crucial in order to keep the bounds $\ubar{B}$ on the second order slopes considerably small.

\section{Parameter-dependent problem}\label{sec:paramProb}
  Let $(z,p)$ be an approximate solution of \eqref{eq:H} for which a pair of inclusion and exclusion regions can be computed as described in Section \ref{sec:fixs0}. In addition, we assume the bounds from \eqref{eq:bounds0} valid for $\intval{x}\subseteq X$ with $z\in \intval{x}$. We aim to prove the existence of a solution of \eqref{eq:H} for every $s\in \intval{s}\subseteq S$. Therefore, we first extend the results from \citet{schichl2005ex} to the parameter-dependent case. In Thm.~\ref{thm:compMu} we then state a method to explicitly construct such a parameter interval $\intval{s}$. As a by-product we get an outer enclosure of a solution region $\intval{x}(\intval{s})\subseteq \intval{x}$ over the parameter set $\intval{s}$. 
  
 Consider any box $\intval{s}\subseteq S\subseteq \zz{R}^p$ with $p\in \intval{s}$ and a continuously differentiable approximation function 
  \begin{equation}\label{eq:that}
    \hat{x}\colon \zz{R}^p\to\zz{R}^n
  \end{equation}
   which satisfies $\that(p) = z$ and $\that(s)\in\intval{x}$ for all $s\in \intval{s}$, and prove for every $s\in \intval{s}$ the existence of an inclusion box  $$\mathbf{R}^i_s\subseteq \mathbf{x}\quad \text{ with }\quad 0\in H\left(\mathbf{R}^i_s,s\right).$$
 Note that the choice of the approximation function may greatly influence the quality, i.e., the radius, of the parameter interval $\intval{s}$ (see Section \ref{sec:examples}).
  
  We define 
   \begin{equation}\label{eq:g}
     g(s) = \begin{pmatrix}\that(s)\\ s\end{pmatrix}, \quad g\colon \intval{s}\to X \times S \subseteq \zz{R}^{n}\times \zz{R}^{p}.
   \end{equation}
   With $\slope[p, s]{\that}$ denoting a slope matrix for $\that$ with center $p$ at $s$, a slope matrix for $g$ is given by
  \begin{equation}\label{eq:gslope}
    \slope[p, s]{g} = \begin{pmatrix}\slope[p, s]{\that}\\ \id\end{pmatrix} \quad \in \zz{R}^{(n+p)\times p},
  \end{equation}
  since
  \begin{equation*}
    g(s)-g(p) = \begin{pmatrix}
      \that(s)-\that(p)\\
      s-p
    \end{pmatrix} = \begin{pmatrix}
      \slope[p, s]{\that}\\
      \id
    \end{pmatrix} \left(s-p\right).
  \end{equation*}

 Let $C$ be the fixed preconditioning matrix from \eqref{eq:C}. For each $s \in \intval{s}$ we define similar bounds as in \eqref{eq:bounds0}
  \begin{align}
      \ubar{\mathfrak{b}}(s) &\geq \absv{CH(\that(s),s)},\label{eq:boundsNew_b}\\ 
      \mathfrak{B}_0(s)&\geq \absv{CH'_x(\that(s),s)-\id},\label{eq:boundsNew_B}\\
      \mathfrak{B}(x,s) &\geq \absv{C\slope[ (\that(s),s), (\that(s),s), (x,s)]{H_{xx}}} \label{eq:boundsNew_Bj}
  \end{align}
  and calculate estimates on the bounds from \eqref{eq:boundsNew_b} and \eqref{eq:boundsNew_B} with respect to the bounds from \eqref{eq:bounds0} using first order slope approximations. Applying the chain rule \eqref{eq:chainRule} to $H(\that(s),s) = (H\circ g)(s)$ we get 
  \begin{equation}\label{eq:estHslope}
    H(\that(s),s) = H(g(p)) + \slope[g(p), g(s)]{H}\,\slope[p,\;s]{g}\,(s-p),
  \end{equation}
and, similarly we estimate the first derivative of $H$ with respect to $x$ by
\begin{align*} \label{eq:estHprimeSlope}
    H'_x(\that(s),s)
    & = H'_x(z, p)+
    \slope[g(p),\; g (s)]{(H'_x)}\, \slope[p,\;s]{g}\,(s-p),
  \end{align*}
where the 3--tensor $\slope[g(p),\; g (s)]{(H'_x)} \in \zz{R}^{n\times n \times (n+p)}$ is a slope for $H'_x$.

By taking absolute values we get with $\til{y} \assign \absv{s-p}$ and \eqref{eq:bounds0}
 \begin{equation*}
  \begin{array}{lcccl}
    \absv{CH(\that(s),s)}
    & \leq &  \ubar{b}  & + & \absv{\slope[g(p),\; g(s)]{CH}}\, \absv{\slope[p,\; s]{g}}\, \til{y},\\
  \absv{CH'_x(\that(s),s)-\id} 
    & \leq & B_0        & + & \absv{C\slope[g(p),\; g(s)]{((H'_x))}}\, \absv{\slope[p,\; s]{g}}\, \til{y}.
  \end{array}
 \end{equation*}
 Hence, we define
    \begin{equation}\label{eq:obarbnew}
    \ubar{\mathfrak{b}}(s)\assign \ubar{b} + G_0(s)\, \til{y} \quad \text{ with }\quad G_0(s)\assign \absv{\slope[g(p),\; g(s)]{CH}}\, \absv{\slope[p,\; s]{g}}
   \end{equation}
and
  \begin{equation}\label{eq:BzeroNew}
  \mathfrak{B}_0(s)\assign B_0  + A(s)\, \til{y}\quad \text{ with } \quad A(s) \assign \absv{C\slope[g(p),\; g(s)]{(H'_x)}}\, \absv{\slope[p,\; s]{g}}.
   \end{equation}
   Note, that $A(s) \in \zz{R}^{n\times n\times p}$ is the result of the multiplication of a 3--tensor with an $((n+p)\times p)$-matrix. Therefore, $A(s)$ is computed by the appropriate multiplication rule from \eqref{eq:tensorRules}.

   \begin{prop}\label{prop1}
	 Let $(z,p)\in \intval{x}\times \intval{s}\subseteq X\times S$, where $\intval{x}$, $\intval{s}$ are any subboxes of $X$ and $S$ containing $(z,p)$ (from \eqref{eq:approxSol}) such that the bounds \eqref{eq:bounds0} hold for all $x\in \intval{x}$. Additionally, let $s\in\intval{s}$ be an arbitrary parameter value, and $\that \assign \that(s)\in \interior(\intval{x})$ be the function value of the approximation function from \eqref{eq:that} at $s$. Further, let $0<v\in\zz{R}^n$ and $\lambda^e$ as in \eqref{eq:lambdaExIn}. Then for a true solution $x=x(s)$ of \eqref{eq:H} at $s$ with $\absv{x-z}\leq \lambda^e\,v$ the deviation
	 $$d_s\assign \absv{x-\that}$$ satifies
	     \begin{equation}\label{eq:devEst}
	       0\leq d_s \leq \ubar{\mathfrak{b}}(s) + \left(\mathfrak{B}_0(s)+\mathfrak{B}(x,s)\,d_s\right)d_{s}
	     \end{equation}
	     with $\ubar{\mathfrak{b}}(s)$, $\mathfrak{B}_0(s)$, and $\mathfrak{B}(x,s)$ as defined in \eqref{eq:obarbnew}, \eqref{eq:BzeroNew}, and \eqref{eq:boundsNew_Bj}, respectively.
   \end{prop}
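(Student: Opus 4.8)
The plan is to use that $x=x(s)$ is an exact zero, $H(x,s)=0$, and to expand $H$ in its first argument around the approximate value $\that=\that(s)$ by means of the second order slope form \eqref{eq:FslopeEst2}. First I would write the slope relation \eqref{eq:slope} for $H(\cdot,s)$ with center $\that$ at $x$,
\begin{equation*}
  0 = H(x,s) = H(\that,s) + \slope[(\that,s),(x,s)]{H_x}\,(x-\that),
\end{equation*}
and substitute the expansion $\slope[(\that,s),(x,s)]{H_x} = H'_x(\that,s) + \slope[(\that,s),(\that,s),(x,s)]{H_{xx}}\,(x_k-\that_k)$, turning the zero equation into an identity that is quadratic in $x-\that$ and involves $H(\that,s)$, $H'_x(\that,s)$, and the second order slope tensor.

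Next I would precondition with the fixed matrix $C$ from \eqref{eq:C} and rearrange into a fixed-point form for the deviation. Multiplying the identity by $C$ and subtracting the resulting (vanishing) expression from $x-\that$, so as to create the factor $\id-CH'_x(\that,s)$, gives
\begin{equation*}
  x-\that = -\,CH(\that,s) - \bigl(CH'_x(\that,s)-\id\bigr)(x-\that) - C\,\slope[(\that,s),(\that,s),(x,s)]{H_{xx}}\,(x-\that)\,(x-\that),
\end{equation*}
where the final term is contracted according to the tensor rules \eqref{eq:tensorRules} and the product convention $v^T\mathcal{T}v$.

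Then I would take componentwise absolute values and apply the triangle inequality together with $\absv{Mu}\leq\absv{M}\,\absv{u}$ and its tensor analogue. Writing $d_s=\absv{x-\that}$ and inserting the three bounds \eqref{eq:boundsNew_b}, \eqref{eq:boundsNew_B}, and \eqref{eq:boundsNew_Bj}, namely $\ubar{\mathfrak{b}}(s)\geq\absv{CH(\that,s)}$, $\mathfrak{B}_0(s)\geq\absv{CH'_x(\that,s)-\id}$, and $\mathfrak{B}(x,s)\geq\absv{C\,\slope[(\that,s),(\that,s),(x,s)]{H_{xx}}}$, yields
\begin{equation*}
  d_s \leq \ubar{\mathfrak{b}}(s) + \mathfrak{B}_0(s)\,d_s + \mathfrak{B}(x,s)\,d_s\,d_s,
\end{equation*}
and factoring $d_s$ out of the last two terms produces exactly \eqref{eq:devEst}; the bound $0\leq d_s$ is immediate since $d_s$ is an absolute value.

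The hypothesis $\absv{x-z}\leq\lambda^e v$ enters only to guarantee that the solution $x$ lies within the exclusion-region scale, hence inside $\intval{x}$ together with $\that\in\interior(\intval{x})$, so that the bounds \eqref{eq:bounds0} and their slope-based extensions \eqref{eq:obarbnew}, \eqref{eq:BzeroNew} are genuinely applicable at $x$ and $\that$; I would make this explicit at the outset. I expect the main obstacle to be the careful handling of the second order slope tensor: I must track the contraction order so that $C\,\slope[(\that,s),(\that,s),(x,s)]{H_{xx}}\,(x-\that)\,(x-\that)$ is indeed a vector, and verify that the componentwise absolute-value estimate survives both contractions so that it matches $\mathfrak{B}(x,s)\,d_s\,d_s$ in the convention of \eqref{eq:tensorRules}. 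The remaining manipulations are routine triangle-inequality estimates.
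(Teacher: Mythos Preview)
Your proposal is correct and follows essentially the same route as the paper: expand $0=H(x,s)$ around $(\that,s)$ via the first order slope, replace the slope by $H'_x(\that,s)$ plus the second order slope tensor as in \eqref{eq:FslopeEst2}, precondition with $C$, rewrite as a fixed-point identity for $x-\that$ producing the factor $CH'_x(\that,s)-\id$, and take componentwise absolute values to reach \eqref{eq:devEst}. The paper additionally records the chain-rule expansion of $H(g(s))$ along $\slope[p,s]{g}$ (your $H(\that,s)$), but that step is only the justification for the bound $\absv{CH(\that,s)}\le\ubar{\mathfrak{b}}(s)$ already established before the proposition, so your direct appeal to \eqref{eq:boundsNew_b}--\eqref{eq:boundsNew_Bj} is equivalent.
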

   \begin{proof} 
     Let $(x^1, s^1)$ be an arbitrary point in the domain of definition of $H$. Then we have by \eqref{eq:slope}
     \begin{equation*}
	     H(x, s) = H(x^1, s^1) + \slope[(x^1,\; s^1), (x,s)^T]{H} \, \begin{pmatrix}                                                           
	      x-x^1\\                                                                                                    s-s^1
	     \end{pmatrix} = 0
     \end{equation*}
     since $x$ is a solution of \eqref{eq:H} at $s$. This simplifies for  $(x^1, s^1)= (\that, s)$ and $g(s)$ as in \eqref{eq:g} to
     \begin{equation}\label{eq:prop41a}
       H(x, s)  = H(g(s)) + \slope[g(s),\; (x,s)^T]{H_{x}} \, (x-\that),                                                                                                            
     \end{equation}
     where we calculate $H(g(s))$ by \eqref{eq:estHslope} with respect to $(z, p)$ as 
     \begin{equation}\label{eq:prop41b}
      H(g(s))  =
      H(g(p))+  %
       \slope[p, s]{(H\circ g)}
       \, ( s-p)
     \end{equation}
	with $  \slope[p, s]{(H\circ g)}\assign \slope[g(p), g(s)]{H}\, \slope[p,s]{g}$, and $\slope[g(s), (x,s)^T]{H_x}$ by \eqref{eq:FslopeEst2} as 
     \begin{equation}\label{eq:prop41c}
       \slope[g(s), (x,s)^T]{H_{x}} 
        = 
       H'_x(g(s)) + \slope[g(s),\; g(s),\; (x,s)^T]{H_{xx}}\,(x-\hat{x})
     \end{equation}
     with $g(s)$, $\slope[p, s]{g}$ as in \eqref{eq:g} and \eqref{eq:gslope}.
     
     Now we consider the deviation between the approximate and a true solution and get with \eqref{eq:prop41a}
     \begin{alignat*}{2}
       -(x-\that) & =  -(x-\that) 
       && + CH(g(s)) + \slope[g(s),\;(x,s)^T]{H_{x}}(x-\that)
     \intertext{which extends by \eqref{eq:prop41c} to}
       & = C H(g(s)) && +(CH'_x(g(s))-\id ) (x-\that)\\
       & && +(x-\that)^T\,\slope[g(s), g(s), (x,s)^T]{CH_{xx}}\, (x-\that).
       \intertext{Taking absolute values, we get by \eqref{eq:boundsNew_Bj}, \eqref{eq:obarbnew}, and  \eqref{eq:BzeroNew}} 
       d_s = \absv{x-\that} & \leq \absv{C H(g(s))} && 
       + \absv{CH'_x(g(s))-\id}  \absv{x-\that}\\
       & && +\absv{x-\that}^T\,\absv{\slope[g(s),\; g(s),\; (x,s)^T]{CH_{xx}}}\,  \absv{x-\that}\\
       & \leq \quad \ubar{\mathfrak{b}}(s) && + \left(\mathfrak{B}_0(s)+\mathfrak{B}(x,s)\,d_s\right)\,d_{s}.
     \end{alignat*}
   \end{proof}%
   
   Using this result, we are able to formulate a first criterion for existence regions.
   
  \begin{theorem}\label{thm:pexReg}
   Let again $s\in\intval{s}$ with corresponding function value $\that \assign \that(s)\in \interior(\intval{x})$ of the approximation function from \eqref{eq:that} at $s$. In addition to the assumptions from Prop.~\ref{prop1} let $0<u\in \zz{R}^{n}$ be such that
    \begin{equation}\label{eq:ineqU}
      \ubar{\mathfrak{b}}(s) + \left(\mathfrak{B}_0(s)+\mathfrak{B}(s)\,u\right)\,u\leq u
    \end{equation}
    with $\mathfrak{B}(s)\geq \, \mathfrak{B}(x,s)$ for all $x$ in $M_u(s)$, where
    \begin{equation}\label{eq:Mus}
      M_u(s) \assign \left\lbrace x \mid \absv{x-\that}\leq u\right\rbrace\subseteq \intval{x}.
    \end{equation}
    Then \eqref{eq:H} has a solution $x(s) \in M_u(s)$. 
  \end{theorem}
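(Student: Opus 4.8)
\emph{The plan is to recognise the hypothesis \eqref{eq:ineqU} as exactly the self-mapping condition $\mathrm{K}\subseteq M_u(s)$ for the Krawczyk operator centred at $\that$ over the box $M_u(s)$, and then to invoke Kahan's existence test, Thm.~\ref{thm:kraw}.} Fixing $s\in\intval{s}$, I would regard the square system $F(x)\assign H(x,s)$ on the box $M_u(s)=[\,\that-u,\,\that+u\,]$, which by \eqref{eq:Mus} lies in $\intval{x}$, with centre $\that=\that(s)$ and preconditioner $C$ from \eqref{eq:C}. Since $C$ is constant, a slope of $CF$ in $x$ is $C\,\slope[g(s),\,(x,s)^T]{H_x}$, so the Krawczyk operator \eqref{eq:krawall} with $z=\that$, $\intval{z}=\lbrace\that\rbrace$ and $\intval{x}=M_u(s)$ becomes
\[
  \mathrm{K}(\that,M_u(s))=\that-CH(g(s))-\bigl(C\,\slope[g(s),\,(x,s)^T]{H_x}-\id\bigr)\bigl(M_u(s)-\that\bigr),
\]
the slope being taken over $x\in M_u(s)$.

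The next step reuses the slope bookkeeping already carried out for Prop.~\ref{prop1}. Expanding the inner slope by \eqref{eq:prop41c} gives $C\,\slope[g(s),\,(x,s)^T]{H_x}-\id=(CH'_x(g(s))-\id)+C\,\slope[g(s),\,g(s),\,(x,s)^T]{H_{xx}}\,(x-\that)$, so for every $x\in M_u(s)$, where $\absv{x-\that}\leq u$, the bounds \eqref{eq:boundsNew_B} and \eqref{eq:boundsNew_Bj} together with $\mathfrak{B}(s)\geq\mathfrak{B}(x,s)$ yield $\absv{C\,\slope[g(s),\,(x,s)^T]{H_x}-\id}\leq\mathfrak{B}_0(s)+\mathfrak{B}(s)\,u$. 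Combining this with $\absv{CH(g(s))}\leq\ubar{\mathfrak{b}}(s)$ from \eqref{eq:boundsNew_b} and $\absv{M_u(s)-\that}=u$, interval arithmetic would give
\[
  \absv{\mathrm{K}(\that,M_u(s))-\that}\leq\ubar{\mathfrak{b}}(s)+\bigl(\mathfrak{B}_0(s)+\mathfrak{B}(s)\,u\bigr)\,u,
\]
and by \eqref{eq:ineqU} the right-hand side is at most $u$, whence $\mathrm{K}(\that,M_u(s))\subseteq[\,\that-u,\,\that+u\,]=M_u(s)$.

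With the inclusion $\mathrm{K}(\that,M_u(s))\subseteq M_u(s)$ in hand, Thm.~\ref{thm:kraw} immediately produces a zero of $F$ in $M_u(s)$, that is, a solution $x(s)\in M_u(s)$ of \eqref{eq:H} at $s$, as claimed. The only delicate point I anticipate is the passage from the interval Krawczyk operator to the componentwise magnitude bound: one must verify that the magnitude of the interval product $\bigl(C\,\slope[g(s),\,(x,s)^T]{H_x}-\id\bigr)\bigl(M_u(s)-\that\bigr)$ is dominated by $\bigl(\mathfrak{B}_0(s)+\mathfrak{B}(s)\,u\bigr)u$, which hinges on the second-order slope bound being uniform over $M_u(s)$ --- precisely what the requirement $\mathfrak{B}(s)\geq\mathfrak{B}(x,s)$ on $M_u(s)$ supplies. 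Everything else is the same estimate as in Prop.~\ref{prop1}, now read as a \emph{sufficient} condition for existence rather than a necessary one.
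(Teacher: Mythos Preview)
Your proposal is correct and follows essentially the same route as the paper: expand the Krawczyk map about $\that$ via the second-order slope decomposition \eqref{eq:prop41c}, bound componentwise using $\ubar{\mathfrak{b}}(s)$, $\mathfrak{B}_0(s)$, and the uniform bound $\mathfrak{B}(s)$ on $M_u(s)$, read \eqref{eq:ineqU} as the self-mapping condition, and invoke Kahan's Thm.~\ref{thm:kraw}. The only cosmetic difference is that the paper works pointwise with the map $\mathrm{K}_s(x)\assign x-CH(x,s)$ and shows $\mathrm{K}_s(x)\in M_u(s)$ for every $x\in M_u(s)$, whereas you phrase the same estimate directly for the interval Krawczyk operator; the underlying bound and conclusion are identical.
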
  
  \begin{proof} 
    For arbitrary $x$ in the domain of definition of $H$ we define
    \begin{equation*}
    \mathrm{K}_s(x) \assign x - CH(x,s).
    \end{equation*}
    For $x\in M_u(s)$ we get with \eqref{eq:prop41a} and \eqref{eq:prop41c}
    \begin{alignat*}{2}
      \mathrm{K}_s(x) & = \that - \Big( CH(g(s)) &&+ (CH'_x (g(s)) -\id ) (x-\that )\\
      &  && +   (x-\that )^T\,\slope[g(s),\: g(s),  (x,s )^T]{CH_{xx}}\, (x-\that )\Big).
    \end{alignat*}
    Taking absolute values we get
    \begin{alignat}{2}\label{eq:estKraw}
      \absv{\mathrm{K}_s(x) -\that} &\leq \absv{CH (g(s) )} && + \absv{C H'_x(g(s)) -\id}\absv{x-\that}\\
      & && + \absv{x-\that}\,\absv{\slope[g(s),\: g(s),  (x,s)^T]{CH_{xx}}}\,\absv{x-\that}\nonumber\\
      & \leq \quad\quad \ubar{\mathfrak{b}}(s)  && +  \left(\mathfrak{B}_0(s) + \mathfrak{B}(s)\,u \right)\,u\nonumber\\
      &\leq \quad\quad u \nonumber
    \end{alignat}
    by assumption \eqref{eq:ineqU}. Thus, $\mathrm{K}_s(x) \in M_u(s)$ for all $x \in M_u(s)$, and since $\mathrm{K}_s(x)$ is equal to the Krawczyk operator \eqref{eq:krawall} for a fixed parameter $s$, we get by Prop.~\ref{thm:kraw} that there exists a solution of \eqref{eq:H} in $M_u(s)$.
  \end{proof}

  Based on the above results, the following theorem provides a way of constructing inclusion and exclusion regions for an approximate solution $\that(s)$. 
    \begin{theorem} \label{thm:pinPex}  
      In addition to the assumptions from Prop.~\ref{prop1} and Thm.~\ref{thm:pexReg}, we take 
      \begin{equation*}
	\mathfrak{B}(s) \geq \mathfrak{B}(x,s) \quad \forall \; x \in \intval{x}.
      \end{equation*}
      For $0<v\in \zz{R}^n$ we define
      \begin{equation*}
      \mathfrak{w}(s) \assign \left(\id -\mathfrak{B}_0(s)\right)v, \quad \mathfrak{a}(s) = v^T\, \mathfrak{B}(s)\,v.
      \end{equation*}
      If
      \begin{equation}\label{eq:discT}
	D_j(s) \assign \mathfrak{w}_j(s)^2 - 4\,\mathfrak{a}_j(s)\,\ubar{\mathfrak{b}}_j(s) > 0 
      \end{equation}
      for all $j=1,\dots,n$, we define 
      \begin{align}\label{eq:lambdajS}
	\lambda_j^e(s) \assign\frac{\mathfrak{w}_j(s)+\sqrt{D_j(s)}}{2\mathfrak{a}_j(s)}, \quad &\lambda_j^i(s) \assign \frac{\ubar{\mathfrak{b}}_j(s)}{\mathfrak{a}_j(s)\cdot\lambda_j^e(s)}     
      \end{align}
      and
      \begin{align}\label{eq:lambdaS}
	\lambda_s^e  \assign \min_{j} \; \lambda_j^e(s), \quad &\lambda_s^i \assign \max_{j}\;\lambda_j^i(s).
      \end{align}
      If $\lambda_s^e > \lambda_s^i$ and  
      \begin{equation}\label{eq:boundary}
      \left(\that_j +\left[-1,\, 1\right]\,\lambda_s^i\, v_j\right) \subseteq \intval{x}_{j} \quad \text{ for all }j,
      \end{equation}
      then there exists at least one zero $x^*$  of \eqref{eq:H} for a parameter set $s$ (i.e., with $H(x^*,s)=0$) in the \fdef{inclusion region}
      \begin{equation}\label{eq:inclusionRegS}
      \mathbf{R}^i_s \assign \left[\that-\lambda_s^iv,\; \that+\lambda_s^iv\right]\subseteq \mathbf{x},
      \end{equation} 
      and these zeros are the only zeros of $H$ at $s$ in the interior of the \fdef{exclusion region}
      \begin{equation*}
      \mathbf{R}^e_s \assign \left[\that-\lambda_s^ev,\; \that+\lambda_s^ev\right]\cap \mathbf{x}.
      \end{equation*}
    \end{theorem}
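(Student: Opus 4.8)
The statement splits into an existence claim and an exclusion (uniqueness-up-to-the-inclusion-region) claim, and my plan is to reduce both to the elementary analysis of the scalar quadratics
\[
  q_j(\lambda) \assign \mathfrak{a}_j(s)\,\lambda^2 - \mathfrak{w}_j(s)\,\lambda + \ubar{\mathfrak{b}}_j(s), \qquad j=1,\dots,n,
\]
exactly as in the fixed-parameter construction of Section~\ref{sec:fixs0}. First I would record that, by Vieta's formulas, $\lambda_j^i(s)$ and $\lambda_j^e(s)$ from \eqref{eq:lambdajS} are precisely the two roots of $q_j$: the larger root is $\lambda_j^e(s)$, and since the product of the roots equals $\ubar{\mathfrak{b}}_j(s)/\mathfrak{a}_j(s)$ the smaller one is $\ubar{\mathfrak{b}}_j(s)/(\mathfrak{a}_j(s)\lambda_j^e(s)) = \lambda_j^i(s)$. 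Because $D_j(s)>0$ in \eqref{eq:discT} and $\mathfrak{a}_j(s)>0$ (the denominators in \eqref{eq:lambdajS} being nonzero), the parabola $q_j$ opens upward with two distinct real roots, so $q_j(\lambda)\le 0$ iff $\lambda_j^i(s)\le\lambda\le\lambda_j^e(s)$ and $q_j(\lambda)\ge0$ iff $\lambda\le\lambda_j^i(s)$ or $\lambda\ge\lambda_j^e(s)$.

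For the existence part I would apply Thm.~\ref{thm:pexReg} with the explicit choice $u\assign\lambda_s^i v$. Substituting $u=\lambda_s^i v$ into \eqref{eq:ineqU} and using $(\mathfrak{B}(s)u)u=(\lambda_s^i)^2\,v^T\mathfrak{B}(s)v=(\lambda_s^i)^2\mathfrak{a}(s)$ together with $(\id-\mathfrak{B}_0(s))v=\mathfrak{w}(s)$, the inequality \eqref{eq:ineqU} becomes, componentwise, exactly $q_j(\lambda_s^i)\le 0$. This holds for every $j$ because $\lambda_j^i(s)\le\max_k\lambda_k^i(s)=\lambda_s^i$ and $\lambda_s^i<\lambda_s^e=\min_k\lambda_k^e(s)\le\lambda_j^e(s)$ by the hypothesis $\lambda_s^e>\lambda_s^i$, so $\lambda_s^i$ lies between the roots of each $q_j$. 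The remaining hypotheses of Thm.~\ref{thm:pexReg} are met: $M_{\lambda_s^i v}(s)\subseteq\intval{x}$ is precisely the boundary condition \eqref{eq:boundary}, and $\mathfrak{B}(s)\ge\mathfrak{B}(x,s)$ on $M_{\lambda_s^i v}(s)$ follows from the assumed bound on all of $\intval{x}$. Thm.~\ref{thm:pexReg} then yields a solution $x(s)\in M_{\lambda_s^i v}(s)=\mathbf{R}^i_s$, which is \eqref{eq:inclusionRegS}.

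For the exclusion part I would take an arbitrary zero $x$ of $H(\cdot,s)$ lying in $\interior(\mathbf{R}^e_s)$ and feed it into the deviation estimate \eqref{eq:devEst} of Prop.~\ref{prop1}, which applies since $x\in\intval{x}$. The key device is to scale out $v$: set $\mu\assign\max_j (d_s)_j/v_j$, so that $d_s\le\mu v$ componentwise with equality $(d_s)_{j_0}=\mu v_{j_0}$ at some index $j_0$. Evaluating \eqref{eq:devEst} in the coordinate $j_0$ and bounding the right-hand side from above using $d_s\le\mu v$ and $\mathfrak{B}(x,s)\le\mathfrak{B}(s)$ — all entries being nonnegative, so the substitution is monotone — collapses the self-referential vector inequality to the single scalar inequality $\mu v_{j_0}\le\ubar{\mathfrak{b}}_{j_0}(s)+\mu(\mathfrak{B}_0(s)v)_{j_0}+\mu^2\mathfrak{a}_{j_0}(s)$, i.e. $q_{j_0}(\mu)\ge 0$. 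Hence either $\mu\le\lambda_{j_0}^i(s)$ or $\mu\ge\lambda_{j_0}^e(s)$.

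To finish, I would invoke the interior hypothesis: $x\in\interior(\mathbf{R}^e_s)$ forces $d_s<\lambda_s^e v$ strictly, hence $\mu<\lambda_s^e\le\lambda_{j_0}^e(s)$, which rules out the second alternative. Therefore $\mu\le\lambda_{j_0}^i(s)\le\max_k\lambda_k^i(s)=\lambda_s^i$, so $d_s\le\mu v\le\lambda_s^i v$ and $x\in\mathbf{R}^i_s$; no zero can lie in $\interior(\mathbf{R}^e_s)\setminus\mathbf{R}^i_s$. I expect the main obstacle to be precisely this exclusion dichotomy: turning the quadratic \emph{self}-bound \eqref{eq:devEst} on the vector $d_s$ into a genuine scalar quadratic inequality. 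The max-index normalization together with the componentwise nonnegativity of $\ubar{\mathfrak{b}}(s)$, $\mathfrak{B}_0(s)$, $\mathfrak{B}(s)$ is what makes the monotone substitution $d_s\le\mu v$ legitimate; without this sign structure the reduction to $q_{j_0}(\mu)\ge0$ would fail, and the clean separation into inclusion and exclusion radii would be lost.
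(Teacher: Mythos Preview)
Your proof is correct and follows essentially the same route as the paper's. Both reduce existence to the ansatz $u=\lambda v$ in Thm.~\ref{thm:pexReg}, yielding the componentwise quadratics $q_j(\lambda)\le 0$ whose roots are $\lambda_j^i(s)$ and $\lambda_j^e(s)$; and both handle exclusion by taking the minimal scaling factor (your $\mu=\max_j (d_s)_j/v_j$ is exactly the paper's ``minimal $\lambda$ with $|x-\hat x|\le\lambda v$'') and reading off the root location of the same quadratic. The only presentational differences are that the paper argues by contradiction (assuming $x\notin\mathbf{R}^i_s$, deriving a \emph{strict} inequality $|x-\hat x|<\lambda v$ from $q_j(\lambda)<0$ on $(\lambda_s^i,\lambda_s^e)$, and contradicting minimality), whereas you argue directly via the dichotomy $q_{j_0}(\mu)\ge 0\Rightarrow\mu\le\lambda_{j_0}^i(s)$ or $\mu\ge\lambda_{j_0}^e(s)$; and that you invoke Prop.~\ref{prop1} rather than the Krawczyk estimate \eqref{eq:estKraw}, which for a zero ($\mathrm{K}_s(x)=x$) are the same inequality.
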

    \begin{proof}
      We set $u=\lambda v$ with arbitrary $0<v\in\zz{R}^{n}$, and check for which $\lambda=\lambda(s) \in\zz{R}_+$ the vector $u$ satisfies property \eqref{eq:ineqU}. We get
      \begin{align*}
	\lambda v &\geq \ubar{\mathfrak{b}}(s) + \left(\mathfrak{B}_0(s) + \lambda\, \mathfrak{B}(s)\,v\right)\,\lambda v\\
	& = \ubar{\mathfrak{b}}(s) + \lambda \left(v-\mathfrak{w}(s)\right) + \lambda^2 \mathfrak{a}(s),
      \end{align*}
      which leads to the sufficient condition
      \begin{equation*}
	\lambda^2\mathfrak{a}(s)-\lambda \mathfrak{w}(s) + \ubar{\mathfrak{b}}(s) \leq 0.
      \end{equation*}
      The $j$th component of this inequality requires $\lambda$ to be between the solutions of the quadratic equation
      \begin{equation*}
	\lambda^2\mathfrak{a}_j(s)-\lambda \mathfrak{w}(s)_j + \ubar{\mathfrak{b}}(s)_j= 0,
      \end{equation*}
      which are exactly $\lambda_j^i(s)$ and $\lambda_j^e(s)$. Since $D_j(s)>0$ for all $j$ by assumption, the interval $\left[\lambda_s^i,\; \lambda_s^e\right]$ is nonempty. Thus, for all $\lambda(s) \in \left[\lambda_s^i,\; \lambda_s^e\right]$, the vector $u$ satisfies \eqref{eq:ineqU}. 
      
      It remains to check, whether the solution(s) in $\mathbf{R}^i_s$ are the only ones in $\mathbf{R}^e_s$. Assume that $x$ is a solution of \eqref{eq:H} at $s$ with $x \in \interior(\mathbf{R}^e_s)\setminus \mathbf{R}^i_s$, and let $\lambda=\lambda(s)$ be minimal with $\absv{x-\that} \leq \lambda v$. By construction, we have $\lambda_s^i < \lambda < \lambda_s^e$. In the proof of Thm.~\ref{thm:pexReg} we got for the Krawczyk operator \eqref{eq:krawall} 
      \begin{equation*}
      \mathrm{K}_s(x) \assign x-CH(x,s) = x,
      \end{equation*}
      since $x$ is a solution of \eqref{eq:H} at $s$. Thus, we get by the same considerations as in the proof of Thm.~\ref{thm:pexReg} from \eqref{eq:estKraw}
      \begin{equation*}
	\absv{x-\that}\leq\ubar{\mathfrak{b}}(s) + \left(\mathfrak{B}_0(s) + \lambda\,\mathfrak{B}(s)\,v\right)\,\lambda v < \lambda v,
      \end{equation*}
      since $\lambda > \lambda_s^i$. Since this contradicts the minimality of $\lambda$, there is no solution of \eqref{eq:H} at $s$ in $\interior (\mathbf{R}^e_s)\setminus \mathbf{R}^i_s$.  So, if \eqref{eq:boundary} is satisfied for all $j$, there exists at least one solution $x^*$ of \eqref{eq:H} at $s$ in the inclusion box $\mathbf{R}^i_s$ and there are no other solutions in $\interior(\mathbf{R}^e_s)\setminus\mathbf{R}^i_s$.
    \end{proof}
The final step is now to compute a feasible parameter $0<\mu\in\zz{R}$ such that Thm.~\ref{thm:pinPex} holds for all 
\begin{equation*}
 s\in \hat{\intval{s}}\assign\left[p-\mu\,y, \; p+\mu\,y\right],
\end{equation*}
with arbitrary scaling vector $y\in \zz{R}^p$. Assume $\widetilde{\intval{s}}\subseteq \intval{s}\in S$, where $\intval{s}$ is an arbitrary box containing p. We compute a lower bound on each component 
  \begin{equation*}
    D_j(s) \assign \mathfrak{w}_j(s)^2 - 4\mathfrak{a}_j(s)\:\ubar{\mathfrak{b}}_j(s)
  \end{equation*}
from the positivity requirement \eqref{eq:discT} of Thm.~\ref{thm:pinPex} over the box $\intval{s}$. For 
the bounds from \eqref{eq:obarbnew} and \eqref{eq:BzeroNew} we compute upper bounds
  \begin{align}
  \ubar{\mathfrak{b}} &\assign \ubar{\mathfrak{b}(\intval{s})} = \ubar{b} + \mu\: \ubar{G_0}\: y \quad \text{ with }\quad \ubar{G_0} \assign \ubar{G_0(\intval{s})},  \label{eq:upobarb} \\
  \ubar{\mathfrak{B}_0} &\assign \ubar{\mathfrak{B}_0(\intval{s})} = B_0+\mu\:\ubar{A}\:y \quad \text{ with } \quad \ubar{A}\assign\ubar{A(\intval{s})}.\label{eq:upBzero}
  \end{align}
  Since by construction $\ubar{G_0}\geq G_0(s)$ and $\ubar{A}\geq A(s)$ for all $s\in \intval{s}$, we have
  \begin{equation*}
  \ubar{\mathfrak{b}}\geq\ubar{\mathfrak{b}}(s)\quad \text{ and }\quad \lbar{\mathfrak{w}} \assign \left(\id-\ubar{\mathfrak{B}_0}\right)v \leq \left(\id-\mathfrak{B}_0(s)\right)v=\mathfrak{w}(s)
  \end{equation*}
  for all $s\in \intval{s}$. By computing an upper bound over an appropriate box $\intval{x}\in X$ with $z\in \intval{x}$ (e.g., take $\intval{x} = k\, \intval{R}_0^i$ with $k\in \zz{R}_+,\, k \leq 1$) we get upper bounds on the second order slopes from \eqref{eq:boundsNew_Bj}
    \begin{equation}\label{eq:upBj}
      \ubar{\mathfrak{B}} \assign \ubar{\absv{\slope[g(\intval{s}), g(\intval{s}), (\mathbf{x}, \intval{s})^T ]{CH_{xx}}}},
    \end{equation}
  which satisfy
  \begin{equation*}
   \ubar{\mathfrak{B}}_j \geq \mathfrak{B}_j(s) \geq \mathfrak{B}_j(x,s) \quad \text{ for all } s \in \intval{s},\; x\in \intval{x}.
  \end{equation*}

 Hence, the lowest values of the discriminant $D$ from \eqref{eq:discT} are obtained by 
    \begin{equation*}
    \lbar{D}_j = \lbar{\mathfrak{w}}_j^2-4\;\ubar{\mathfrak{a}}_j\;\ubar{\mathfrak{b}}_j
  \end{equation*}
 with $\ubar{\mathfrak{b}}$ from \eqref{eq:upobarb} and 
  \begin{equation}\label{eq:param4Dj}
      \lbar{\mathfrak{w}} \assign \mathfrak{w}(\mu)= \left(\id-\ubar{\mathfrak{B}_0}\right)v,\qquad \ubar{\mathfrak{a}} \assign v^T\, \ubar{\mathfrak{B}}\, v.
      \end{equation}

  Considering $\lbar{D}_j = \lbar{D}_j(\mu)$, we get
  \begin{equation}\label{eq:Djmu}
    \lbar{D}_j(\mu) = \alpha_j^2\:\mu^2-2\:\beta_j\:\mu + \gamma_j
  \end{equation}
 with 
  \begin{equation}\label{eq:alphaBeta}
  \alpha_j \assign \left((\ubar{A}y)v\right)_j, \qquad
  \beta_j \assign \alpha_j w_j+2\:\ubar{\mathfrak{a}}_j\left(\ubar{G_0}y\right)_j, \qquad
  \gamma_j = w_j^2-4\:\ubar{\mathfrak{a}}_j\:\ubar{b}_j
  \end{equation}
  and $w_j$ and $\ubar{b}_j$ from \eqref{eq:bounds0}. Solving each quadratic equation 
  \begin{equation}\label{eq:quadGlDmu}
    \lbar{D}_j(\mu) = \alpha_j^2\:\mu^2-2\:\beta_j\:\mu + \gamma_j = 0
  \end{equation}
  for $\mu$, we get 
  \begin{equation}\label{eq:mushort}
    \mu_j = \frac{\beta_j \pm \sqrt{\beta_j^2-\alpha_j^2\:\gamma_j}}{\alpha_j^2}.
  \end{equation}
  Since $H(z,p)\approx 0$, the discriminant in \eqref{eq:mushort} is smaller than $\beta_j^2$, since we have
  \begin{equation*}
    \beta_j^2-\alpha^2\gamma_j = \beta_j^2-\alpha_j^2(w_j^2-4\:\ubar{\mathfrak{a}}_j\:\ubar{b}_j)
  \end{equation*}
   with $\ubar{b}_j$ being an upper bound for the function value at $(z, p)$ and thus, close to zero. Hence, both solutions of \eqref{eq:quadGlDmu} are positive. In order to derive numerically stable results, we compute these solutions by
    \begin{equation}\label{eq:mustable}
      \ubar{\mu}_j =  \frac{\beta_j + \sqrt{\beta_j^2-\alpha_j^2\gamma_j}}{\alpha_j^2}, \qquad \lbar{\mu}_j=\frac{\gamma_j}{\alpha_j^2\:\ubar{\mu}_j}.
    \end{equation}
    and set 
    \begin{equation*}
      \mu \assign \min_j \; \lbar{\mu}_j,
    \end{equation*}
    since we need $\mu \in [0,\, \lbar{\mu}_j]$ in order to meet the positivity requirement \eqref{eq:discT} in the $j$-th component.
    
    Now we are able to state and prove the main result of this section.
 
  \begin{theorem}\label{thm:compMu}
  Let $\intval{s}\in S$ with $p\in \intval{s}$, $\intval{x}\in X$ with $z\in \intval{x}$ as above. In addition to the assumptions of Thm.~\ref{thm:pinPex} we assume upper bounds
    \begin{enumerate}
      \item on the first order slope of $H(g(s))$,
     \begin{equation}\label{eq:G0thm}
      \ubar{G}_0  \assign \ubar{G_0(\intval{s})} = \ubar{\absv{\slope[g\left(p\right), g(\intval{s})]{CH}}\, \absv{\slope[p,\; \intval{s}]{g}}} ,
    \end{equation}
    \item  on the slope of the first derivative of $H(g(s))$ wrt.~$x$, 
    \begin{equation}\label{eq:Athm}
      \ubar{A} \assign \ubar{A(\intval{s})} = \ubar{\left\vert C\slope[g(p),\; g(\intval{s})]{(H'_x)}\right\vert\, \left\vert\slope[p,\; \intval{s}]{g}\right\vert},
    \end{equation}
    \item and on the second order slopes of $H(g(s))$ wrt.~$x$
    \begin{equation*}
     \ubar{\mathfrak{B}}\assign \ubar{\absv{C\slope[g(\intval{s}),\; g(\intval{s}),\; (\intval{x},\intval{s})^T]{H_{xx}}}}
    \end{equation*}
    \end{enumerate}
    which hold for all $s\in\intval{s}$, $x\in \intval{x}$.  
    Let further $0<y\in\zz{R}^{p}$, $0<v\in \zz{R}^{n}$ as before,
    \begin{equation}\label{eq:lbarMuj}
    \ubar{\mu}_j =  \frac{\beta_j + \sqrt{\beta_j^2-\alpha_j^2\gamma_j}}{\alpha_j^2}, \qquad \lbar{\mu}_j=\frac{\gamma_j}{\alpha_j^2\:\ubar{\mu}_j}.
  \end{equation}
  with $\alpha$, $\beta$, $\gamma$ as defined in \eqref{eq:param4Dj} and \eqref{eq:alphaBeta}, respectively, and 
  \begin{equation*}
   \lbar{\mu}\assign \min_j\lbar{\mu}_j.
  \end{equation*}
   Let $\eta \in [0, \lbar{\mu}]$ be maximal such that
  \begin{equation}\label{eq:assumptLambda}
   \lambda^e(\eta)>\lambda^i(\eta),
  \end{equation}
where
\begin{align}\label{eq:thm4lexlin}
\begin{split}
  \lambda^e_{\eta}  = \min_j \lambda_j^e(\eta), \quad \text{ with }\quad   &\lambda_j^e(\eta) =\frac{\mathfrak{w}_j(\eta)+\sqrt{\lbar{D}_j(\eta)}}{2\ubar{\mathfrak{a}}_j},\\ 
   \lambda^i_{\eta}  = \max_j \lambda_j^i(\eta), \quad  \text{ with }\quad  &\lambda_j^i(\eta) =\frac{\ubar{\mathfrak{b}}_j(\eta)}{\ubar{\mathfrak{a}}_j\; \lambda_j^e(\eta)}
\end{split}
\end{align}
with $\mathfrak{w}_j(\eta)$, $¸\ubar{\mathfrak{b}}_j(\eta)$, $\ubar{\mathfrak{a}}$ and $D_j(\eta)$ as defined in \eqref{eq:Djmu} and \eqref{eq:param4Dj}. Further, let $\sigma\in \left[0,\lbar{\mu}\right]$ be the largest value such that for all $j=1,\dots,n$
\begin{equation}\label{eq:assumptSigma}
 \that_j(\intval{s}_{\sigma}) + \left[-1, \, 1\right]\,\lambda^i_{\sigma}\, v_j \subseteq \intval{x}_{j}
\end{equation}
for $\intval{s}_{\sigma} \assign \left[p-\sigma y, \, p+\sigma y\right]$.
If\begin{equation}\label{eq:muFinal}
      \mu \assign \min(\eta, \sigma) > 0,
    \end{equation} 
then for all $s\in \interior(\widetilde{\intval{s}})\cap \intval{s}$ with 
\begin{equation*}
    \widetilde{\intval{s}} \assign \left\lbrace s\mid \absv{s-p}\leq \mu\: y\right\rbrace
    \end{equation*}
    there exists at least one solution $x$ of \eqref{eq:H} which lies inside the inclusion box 
    $\mathbf{R}^i_s$ (as defined in \eqref{eq:inclusionRegS}) and there are no solutions in $\cup_{s\in \intval{s}}\interior(\mathbf{R}^e_s)\setminus \cup_{s\in \intval{s}}(\mathbf{R}^i_s)$.
  \end{theorem}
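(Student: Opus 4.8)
The plan is to verify that the hypotheses of Thm.~\ref{thm:pinPex} hold \emph{simultaneously} for every $s\in\widetilde{\intval{s}}$, and then to collect its pointwise existence and exclusion conclusions over the whole parameter box. The entire construction preceding the theorem rests on one monotonicity principle: enlarging the parameter radius can only worsen every quantity feeding into Thm.~\ref{thm:pinPex}. Concretely, for $\absv{s-p}\leq\mu\,y$ one has $\til{y}=\absv{s-p}\leq\mu\,y$, and combining this with the uniform slope bounds $\ubar{G_0}\geq G_0(s)$, $\ubar{A}\geq A(s)$, $\ubar{\mathfrak{B}}\geq\mathfrak{B}(s)$ from \eqref{eq:G0thm}--\eqref{eq:Athm} yields the entrywise estimates $\ubar{\mathfrak{b}}(\mu)\geq\ubar{\mathfrak{b}}(s)$, $\ubar{\mathfrak{B}_0}(\mu)\geq\mathfrak{B}_0(s)$, and hence $\lbar{\mathfrak{w}}(\mu)\leq\mathfrak{w}(s)$ and $\ubar{\mathfrak{a}}\geq\mathfrak{a}(s)$, using $v>0$. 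First I would record these inequalities carefully, since everything downstream is a consequence of them.

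The second step handles the discriminant. Substituting the affine-in-$\mu$ bounds into $\lbar{D}_j=\lbar{\mathfrak{w}}_j^2-4\,\ubar{\mathfrak{a}}_j\,\ubar{\mathfrak{b}}_j$ reproduces exactly the quadratic \eqref{eq:Djmu} with coefficients \eqref{eq:alphaBeta}, whose smaller root is the $\lbar{\mu}_j$ of \eqref{eq:lbarMuj}. Because $\gamma_j=\lbar{D}_j(0)>0$ (as $\ubar{b}_j\approx0$) and both roots of \eqref{eq:quadGlDmu} are positive, $\lbar{D}_j(\cdot)$ is positive and decreasing on $[0,\lbar{\mu}_j]$; thus for any $\mu\leq\lbar{\mu}=\min_j\lbar{\mu}_j$ I obtain $0<\lbar{D}_j(\mu)\leq D_j(s)$ for all $j$ and all $s\in\intval{s}_\mu$, which is precisely the positivity requirement \eqref{eq:discT}.

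The crux is transferring the worst-case gap $\lambda^e(\eta)>\lambda^i(\eta)$ of \eqref{eq:assumptLambda} to every individual $s$. Here I would compare the genuine per-$s$ quadratic $q^s_j(\lambda)=\mathfrak{a}_j(s)\lambda^2-\mathfrak{w}_j(s)\lambda+\ubar{\mathfrak{b}}_j(s)$, whose roots are $\lambda^i_j(s)\leq\lambda^e_j(s)$, with the worst-case quadratic $q^{\mathrm{wc}}_j(\lambda)=\ubar{\mathfrak{a}}_j\lambda^2-\lbar{\mathfrak{w}}_j(\eta)\lambda+\ubar{\mathfrak{b}}_j(\eta)$, whose roots are exactly the $\lambda^i_j(\eta)\leq\lambda^e_j(\eta)$ of \eqref{eq:thm4lexlin}. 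The entrywise inequalities of the first step make each of the three coefficient differences contribute negatively, so that $q^s_j(\lambda)-q^{\mathrm{wc}}_j(\lambda)\leq0$ for every $\lambda\geq0$. Consequently the sign-change interval $[\lambda^i_j(\eta),\lambda^e_j(\eta)]$ of $q^{\mathrm{wc}}_j$ is contained in that of $q^s_j$, giving $\lambda^e_j(\eta)\leq\lambda^e_j(s)$ and $\lambda^i_j(s)\leq\lambda^i_j(\eta)$; taking $\min_j$ and $\max_j$ yields $\lambda^e_\eta\leq\lambda^e_s$ and $\lambda^i_s\leq\lambda^i_\eta$, whence \eqref{eq:assumptLambda} forces $\lambda^e_s\geq\lambda^e_\eta>\lambda^i_\eta\geq\lambda^i_s$ for every $s$ with $\absv{s-p}\leq\eta\,y$. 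This coefficientwise monotonicity of the two roots is the technical heart and the step I expect to be the main obstacle, precisely because the two bounds must be conservative in \emph{opposite} directions while sharing the mixed coefficients $\ubar{\mathfrak{a}}$.

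Finally I would assemble the conclusion. For $\mu=\min(\eta,\sigma)>0$ and any $s\in\interior(\widetilde{\intval{s}})\cap\intval{s}$ we have $\absv{s-p}\leq\mu\,y\leq\eta\,y$, so \eqref{eq:discT} and $\lambda^e_s>\lambda^i_s$ hold by the previous two steps, while $\that(s)\in\interior(\intval{x})$ follows from the boundary bookkeeping below; and since $\mu\leq\sigma$ we have $s\in\intval{s}_\sigma$, so $\that_j(s)\in\that_j(\intval{s}_\sigma)$ together with $\lambda^i_s\leq\lambda^i_\sigma$ (the same comparison at radius $\sigma$) turns \eqref{eq:assumptSigma} into the boundary inclusion \eqref{eq:boundary}. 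All hypotheses of Thm.~\ref{thm:pinPex} are thus met at $s$, producing a zero $x(s)\in\mathbf{R}^i_s$ (as in \eqref{eq:inclusionRegS}) and excluding further zeros of $H(\cdot,s)$ in $\interior(\mathbf{R}^e_s)\setminus\mathbf{R}^i_s$. Collecting these pointwise statements over $s\in\widetilde{\intval{s}}$ --- noting that a zero at $s$ lying in $\interior(\mathbf{R}^e_s)$ already lies in $\mathbf{R}^i_s\subseteq\bigcup_{s}\mathbf{R}^i_s$ --- gives the claimed exclusion on $\bigcup_{s}\interior(\mathbf{R}^e_s)\setminus\bigcup_{s}\mathbf{R}^i_s$.
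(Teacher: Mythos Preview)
Your proposal is correct and follows essentially the same route as the paper: verify, for each $s$ in the constructed box, the three hypotheses of Thm.~\ref{thm:pinPex} (positivity of $D_j(s)$, the gap $\lambda^e_s>\lambda^i_s$, and the boundary inclusion \eqref{eq:boundary}) by passing to the worst-case bounds $\ubar{\mathfrak{b}}$, $\ubar{\mathfrak{B}_0}$, $\ubar{\mathfrak{B}}$ over the parameter box. The only notable difference is technical: for the root comparison the paper argues directly from the explicit formulas \eqref{eq:lambdaMu}, first comparing the true quantities at $s$ with the worst-case quantities at radius $\nu$ and then using monotonicity in the radius to pass from $\nu$ to $\eta$, whereas you compare the two quadratics $q^s_j$ and $q^{\mathrm{wc}}_j$ coefficientwise and read off the nesting of their root intervals in one stroke; both arguments are equivalent and yours is arguably the cleaner packaging of the same monotonicity.
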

  \begin{proof}
  Wlog., let $s=p+ \nu\, y\in\interior(\widetilde{\intval{s}})\cap \intval{s}$, i.e., $0<\nu<\mu$. In order to meet all requirements from Thm.~\ref{thm:pinPex}, which provides the result about the inclusion/exclusion regions at $s$, we have to check that the following three conditions hold:
  \begin{enumerate}[ref=(\roman*)]
   \item $D_j(s)>0$ for all $j=1,\dots,n$, with $D_j$ as in \eqref{eq:discT} (\emph{positivity requirement}). \label{req1}
   \item  $\lambda_s^i< \lambda_s^e$ (\emph{monotonicity of the inclusion/exclusion parameters}).\label{req2}
   \item $\left(\that_j(s) +\left[-1,\, 1\right]\, \lambda^i_s\,\:v_j\right) \subseteq \intval{x}_j$  $\forall \, j=1, \dots,n$ (\emph{feasibility of the inclusion region} $\mathbf{R}_s^i$). \label{req3}
  \end{enumerate}
  Condition \ref{req1} is satisfied by construction, since we get for $D_j(s)$ as in \eqref{eq:discT} by the calculations preceeding the statement of the theorem
  \begin{equation*}
  0 \leq \lbar{D}_j\left(\lbar{\mu}\right) \leq \lbar{D}_j\left(\mu\right)<\lbar{D}_j\left(\nu\right)\leq D_j\left(s\right)
  \end{equation*}
for all $s\in \interior(\widetilde{\intval{s}})\cap\intval{s}$.

For \ref{req2} we consider $\lambda^e_j$, $\lambda^i_j$ componentwise as functions in $\nu$, i.e.,
  \begin{equation}\label{eq:lambdaMu}
\lambda_j^e(\nu) \assign \frac{\lbar{\mathfrak{w}}_j(\nu)+\sqrt{\lbar{\mathfrak{w}}_j(\nu)^2-4\,\ubar{\mathfrak{a}}_j\ubar{\mathfrak{b}}_j(\nu)}}{2\,\ubar{\mathfrak{a}}_j}, \quad
\lambda_j^i(\nu) \assign \frac{\ubar{\mathfrak{b}}_j(\nu)}{\ubar{\mathfrak{a}}\; \lambda_j^e(\nu)}.
  \end{equation}  
Since by construction $\lbar{\mathfrak{w}}\leq \mathfrak{w}(s)$, $\ubar{\mathfrak{b}}\geq \ubar{\mathfrak{b}}(s)$, $\ubar{\mathfrak{a}}\geq \mathfrak{a}(s)$, we have 
\begin{equation}\label{eq:prf1}
\lambda_j^e(\nu) \leq \lambda_j^e(s), \quad \lambda_j^i(\nu) \geq \lambda_j^i(s)
\end{equation}
with $\lambda_j^e(s)$, $\lambda_j^i(s)$ as in \eqref{eq:lambdajS}. Both $\lambda^e_j\left(\nu\right)$ and $\lambda^i_j\left(\nu\right)$ are depending continuously on $\nu$. In particular, for increasing $\nu$,  $\lambda^e_j(\nu)$ is monotonically decreasing and $\lambda^i_j(\nu)$ monotonically increasing. Hence, $\lambda^e_{\nu} = \min_j \lambda_j^e(\nu)$ and $\lambda^i_{\nu} = \max_j \lambda_j^i(\nu)$ have the same monotonicity behaviour, since we take a minimum (resp.~maximum) of a monotonically decreasing (resp.~increasing) function.
By computing $\lbar{\mu}$ from \eqref{eq:lbarMuj}, we get a lower and an upper bound on the exclusion and inclusion parameter $\lambda^e$ and $\lambda^i$, respectively, since  
$\lbar{D}_k\left(\lbar{\mu}\right) = 0$ implies $\lambda_k^e\left(\lbar{\mu}\right)=\lambda_k^i\left(\lbar{\mu}\right)$ for some $k\in \lbrace{1,\dots,n\rbrace}$. Since we choose $\eta\in \left[0,\, \lbar{\mu}\right]$ in such a way that $\lambda^e_{\eta}>\lambda^i_{\eta}$, we have in particular
\begin{equation*}
 \lambda_j^e(\eta)\geq \lambda_k^e(\lbar{\mu}), \quad \lambda_j^i(\eta)\leq \lambda_k^i(\lbar{\mu})
 \end{equation*}
for all $j = 1,\dots,n$. By monotonicity, we have 
\begin{equation}\label{eq:prf2}
\lambda_j^e(\nu)\geq \lambda_j^e(\eta), \quad \lambda_j^i(\nu)\leq \lambda_j^i(\eta).
\end{equation}
Taking the minimum (resp.~maximum) over all $j$, we get by \eqref{eq:prf1}, \eqref{eq:prf2} and assumption \eqref{eq:assumptLambda}
\begin{equation*}
 \lambda^i_s \leq \lambda^i_{\nu}\leq \lambda^i_{\eta}<\lambda^e_{\eta}\leq\lambda^e_{\nu}\leq \lambda^e_s,
\end{equation*}
hence, condition \ref{req2} is satisfied.

Finally, we have for $\nu\leq \sigma$ and by assumption \eqref{eq:assumptSigma}
\begin{equation*}
 \that_j(s)+\lambda_s^i\:v_j  \leq \ubar{\that_j(\intval{s}_{\nu})}+\lambda^i_{\nu}\:v_j\leq \ubar{x}_j, \quad \text{ and }\quad 
  \that_j(s)-\lambda_s^i\:v_j  \geq \lbar{\that_j(\intval{s}_{\nu})}-\lambda^i_{\nu}\:v_j\geq \lbar{x}_j,
\end{equation*}
since $\that(s)\in \that(\intval{s}_{\nu})$, and $\lambda_s^i\leq \lambda_{\nu}^i$ by \ref{req2}. Hence, condition \ref{req3} is satisfied as well, which concludes the proof. 
 \end{proof}
 As for the non-parametric case, the above considerations simplify if \eqref{eq:H} is quadratic in both $x$ and $s$. Since the first order slopes then are linear in $x$ and $s$, all second order slopes are constant. Hence the estimates $\mathfrak{B}(x,s)$ become valid everywhere in the domain of definition of $H$, i.e., $\ubar{\mathfrak{B}}=\mathfrak{B}(x,s)=\ubar{B}$. If the approximation function \eqref{eq:that} is linear, its first order slope \eqref{eq:slope} is constant, which simplifies the bounds in \eqref{eq:G0thm} and \eqref{eq:Athm}. In particular, if \eqref{eq:H} is quadratic and the approximation function $\that$ is linear both in $x$ and $s$, $\ubar{A}$ from \eqref{eq:Athm} is constant.

\section{Example}\label{sec:examples}

\paragraph{Inclusion/exclusion region for fixed parameter $p$.}
	We consider the system of equations
	\begin{equation}\label{eq:exH}
	H(x,s) = \begin{pmatrix}
	x_1^2 + x_2^2-26+s^2\\
	x_1\cdot x_2 -13+s
	\end{pmatrix} = 0
	\end{equation}
	for $s\in \intval{s} = [0,\;2]$, $x \in \intval{x}= [0,\;5]\times [0,\; 5]$.
	
	We set $p = 1$ and compute a corresponding solution $z = (3,4)^T$.
	A slope for $H$ with center $(z,p)$ can be computed as
	\begin{align}\label{eq:exHslope}
	H(x,s)-H(z,p) & = \underbrace{\begin{pmatrix}
		x_1+z_1 & x_2 + z_2 & s+p\\
		x_2 & z_1 & 1
		\end{pmatrix}}_{=\slope[(z,p),(x,s)]{H}}
	\begin{pmatrix}
	x_1-z_1\\
	x_2-z_2\\
	s-p
	\end{pmatrix}.
	\intertext{We get for the solution from above}
	\begin{split}
	\slope[(z,p), (x,s)]{H} &= 
	\begin{pmatrix}
	\slope[(z,p), (x,s)]{H_x}, &  \slope[(z,p), (x,s)]{H_s}
	\end{pmatrix}\\\nonumber
	& = \begin{pmatrix}	
	\begin{pmatrix}
	x_1+3 & x_2 +4 \\
	x_2 & 3
	\end{pmatrix}, 
	\begin{pmatrix}
	s+1\\
	1
	\end{pmatrix}
	\end{pmatrix},     
	\end{split}
	\end{align}
	and for the Jacobian of $H$ wrt.~$x$ at $(z,p)$
	\begin{equation*}
	H'_{x} = \begin{pmatrix}
	2x_1 & 2x_2\\x_2 & x_1
	\end{pmatrix}, \quad
	H'_{x}(z, p) = \begin{pmatrix} 6 & 8 \\ 4& 3\end{pmatrix}.
	\end{equation*}
	For the preconditioning matrix $C$ we take
	\begin{equation}\label{eq:exC}
	C \assign H'_{x}(z,p)^{-1} = \frac{1}{14}\begin{pmatrix*}[r]
	-3 & 8\\ 4 & -6
	\end{pmatrix*}.
	\end{equation}
	The slope from \eqref{eq:exHslope} can be put in form \eqref{eq:FslopeEst2} with
	\begin{equation*}
	H_1 = \begin{pmatrix} 1 & 0 & 0  \\ 0 & 0 & 0\end{pmatrix}, \quad H_2 = \begin{pmatrix}
	0 & 1& 0 \\ 1 & 0 & 0
	\end{pmatrix}, \quad H_3 = \begin{pmatrix}
	0 & 0 & 1\\ 0 & 0 & 0
	\end{pmatrix}.
	\end{equation*}
	Thus, we get for the bounds from \eqref{eq:bounds0}
	\begin{equation}\label{eq:exBounds0}
	\ubar{b} = (0,0)^T, \quad B_0 = \begin{pmatrix}
	0 & 0 \\ 0 & 0
	\end{pmatrix}, \quad \ubar{B}= \begin{pmatrix}
	\frac{1}{14}
	\begin{pmatrix}
	3 & 0 \\ 4 & 0
	\end{pmatrix}, \;
	\frac{1}{14}
	\begin{pmatrix}
	8 & 3 \\ 6 & 4
	\end{pmatrix}
	\end{pmatrix},
	\end{equation}
	where $\ubar{b}$ and $B_0$ both vanish, since $z$ happens to be an exact zero of \eqref{eq:exH}, and we computed without roundoff-errors.
	With $v=(1,1)^T$ we get from \eqref{eq:aw}
	\begin{equation}\label{eq:exaw}
	w = (1,1)^T, \quad a = \left(\ubar{B}_1+\ubar{B}_2\right)v = (1,1)^T,
	\end{equation}
	and with $D_1 = D_2 = 1$ we get from \eqref{eq:lambdaExIn}
	\begin{equation*}
	\lambda^e = 1, \quad \lambda^i = 0.
	\end{equation*}
	Hence, we get 
	\begin{equation*}
	\intval{R}^e_0 = \begin{pmatrix}
	[2,4]\\
	[3,5]
	\end{pmatrix}, \quad \intval{R}^i_0 = \begin{pmatrix} 3\\4\end{pmatrix}.
	\end{equation*}

\paragraph{Construction of a feasible parameter interval $\widetilde{\intval{s}}$.} We consider a linear approximation function
  \begin{equation*}
    \that \colon \zz{R}\to \zz{R}^2, \quad \that(s) = z+\Theta\, (s-p)
  \end{equation*}
with $\Theta \in \zz{R}^{2\times 1}$. We compute the parameter interval $\widetilde{\intval{s}}$ from Thm.~\ref{thm:compMu} for two different approximation functions, 
\begin{enumerate}
 \item a \emph{tangent} $\that^{tan}$ in $(z,p)$ with 
 \begin{equation}\label{eq:ThetaTang}
 \Theta^{tan} = -(H'_x(z,\, p))^{-1}\,(H'_s(z, p)) = -\frac{1}{7}\begin{pmatrix}1\\1 \end{pmatrix},
 \end{equation}
 \item a \emph{secant} $\that^{sec}$ through the center $(z,p)$ and a second point $x^1=(\sqrt{13},\sqrt{13})^T$ at $s^1=0$ with
 \begin{equation}\label{eq:ThetaSec}
  \Theta^{sec} = \left(x^1-z\right)\, \left(s^1-p\right)^{-1} = \begin{pmatrix}
                                                                    3-\sqrt{13}\\4-\sqrt{13}
                                                                   \end{pmatrix}.
\end{equation}
\end{enumerate}
Thus, we have
\begin{equation*}
g(s) = \begin{pmatrix}
           z+\Theta \, (s-p)\\
           s
         \end{pmatrix}\quad \text{ with constant slope matrix } \slope[s,p]{g}=\begin{pmatrix}
									 \Theta\\1\end{pmatrix}.
\end{equation*}
In order to apply Thm.~\ref{thm:compMu} we compute the upper bounds $\ubar{G}_0$ and $\ubar{A}$ from \eqref{eq:G0thm} and \eqref{eq:Athm}, respectively. A slope for $H'_x$ is 
  \begin{equation*}
    \slope[g(p), g(s)]{(H'_x)} = \begin{pmatrix}
      \begin{pmatrix}
	2 & 0 \\ 0 & 1
      \end{pmatrix}, \;
      \begin{pmatrix}
	0&2\\1&0
      \end{pmatrix}, \;
      \begin{pmatrix}
	0 & 0\\
	0 & 0
      \end{pmatrix}
    \end{pmatrix},                                                  
  \end{equation*}
since
  \begin{align*}
   H'_x(x,s) - H'_x(z, p) & = \begin{pmatrix}
      2 & 0 \\ 0&  1
    \end{pmatrix}(x_1-z_1) + \begin{pmatrix}
      0 & 2 \\ 1 & 0\end{pmatrix}(x_2-z_2).
  \end{align*}
With preconditiong matrix $C$ from \eqref{eq:exC} we get
\begin{align*}
 \ubar{G}_0^{tan} & = \frac{1}{98}\begin{pmatrix}51\\58\end{pmatrix}, &\text{ and }\quad  \ubar{A}^{tan} &= \frac{1}{7}\begin{pmatrix}1&1\\1&1\end{pmatrix},\\
  \ubar{G}_0^{sec} & = \frac{1}{7}\begin{pmatrix}14(\sqrt{13}-3)\\58-14\sqrt{13}\end{pmatrix},  &\text{ and }\quad \ubar{A}^{sec} &= \frac{1}{7}\begin{pmatrix}7-\sqrt{13}&\sqrt{13}\\\sqrt{13}&7-\sqrt{13}\end{pmatrix}
\end{align*}
for tangent $\that^{tan}$ and secant $\that^{sec}$, respectively. Thereby we compute the tensor-vector-product in the formula for $\ubar{A}$ using the tensor rules \eqref{eq:tensorRules}. Since we have a quadratic problem, the second order slopes are constant for all $x \in \intval{x}$, $s\in \intval{s}$, and thus, $\ubar{\mathfrak{B}}= \ubar{B}$ from \eqref{eq:bounds0}. With these preparations, we are able to compute $\mu$. We take $y = 1$, and get from \eqref{eq:alphaBeta}
\begin{enumerate}
 \item for tangent $\that^{tan}$: 
 \begin{equation*}
  \alpha^{tan} = \frac{2}{7}\begin{pmatrix}1\\1\end{pmatrix},\quad \beta^{tan}=\frac{1}{49}\begin{pmatrix}65\\72\end{pmatrix},\quad \gamma^{tan}=\begin{pmatrix}1\\1\end{pmatrix},
 \end{equation*}
\item and for secant $\that^{sec}$
  \begin{equation*}
  \alpha^{sec} = \begin{pmatrix}1\\1\end{pmatrix},\quad \beta^{sec}=\frac{1}{7}\begin{pmatrix}28\sqrt{13}-77\\123-28\sqrt{13}\end{pmatrix},\quad \gamma^{sec}=\begin{pmatrix}1\\1\end{pmatrix},
 \end{equation*}
\end{enumerate}
which results in
\begin{equation*}
 \mu^{tan} \approx 0.343, \quad\quad \mu^{sec}\approx 0.149.
\end{equation*}

  \begin{figure}[htb]
      \captionsetup{width=0.7\textwidth}
  	\centering
    {\includegraphics[width=0.85\textwidth ]{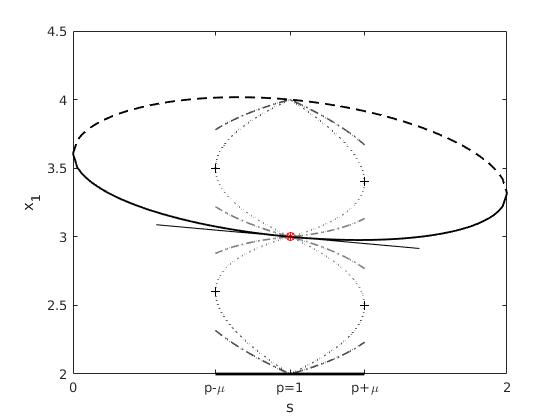}}
    \caption[]{Two solution curves (solid and dashed lines) for $x_1$ over $\intval{s}$, approximation function $\that^{tan}$ and inclusion regions (expanding from center $*$) and exclusion regions (contracting towards the solution curve (solid line)).
    }
    \label{fig:exInExBox}
   
  \end{figure}

The respective inclusion and exclusion parameters from \eqref{eq:thm4lexlin} are
\begin{align*}
 \lambda^i_{\mu^{tan}} \approx 0.45092049, \quad & \lambda^e_{\mu^{tan}} \approx 0.45092053\\
 \lambda^i_{\mu^{sec}} \approx 0.42531789, \quad & \lambda^e_{\mu^{sec}} \approx 0.42531794,
\end{align*}
so, for both approximation functions the monotonicity requirement \eqref{eq:assumptLambda} holds for $\mu$. We still have to check the feasibility condition \eqref{eq:assumptSigma} of the inclusion region for the parameter intervals
 \begin{equation}\label{eq:exIntvalS}
  \intval{s}_{\mu}^{tan}=\left[0.657,\, 1.343\right], \quad \intval{s}_{\mu}^{sec} = \left[0.851,\, 1.149\right].
 \end{equation}
For both intervals the feasibility requirement is met, since
 \begin{align}\label{eq:exXboxes}
  \begin{split}
   \hat{\intval{x}}(\intval{s}_{\mu}^{tan}) = \that(\intval{s}_{\mu}^{tan})+[-1,\, 1]\,\lambda^i_{\mu^{tan}}\, v&= \begin{pmatrix}[2.406,\, 3.594]\\ [3.406,\, 4.594]\end{pmatrix}\subseteq \begin{pmatrix}[0,\, 5]\\ [0,\, 5]\end{pmatrix}
   \\
    \hat{\intval{x}}(\intval{s}_{\mu}^{sec}) = \that(\intval{s}_{\mu}^{sec})+[-1,\, 1]\,\lambda^i_{\mu^{sec}}\, v &= \begin{pmatrix}[1.969,\, 4.031]\\ [3.180,\, 4.820]\end{pmatrix}\subseteq \begin{pmatrix}[0,\, 5]\\ [0,\, 5]\end{pmatrix}.
 \end{split}
 \end{align}
Hence, for both parameter intervals from \eqref{eq:exIntvalS} the existence of at least one solution of \eqref{eq:exH} can be guaranteed. The boxes from \eqref{eq:exXboxes} provide a first outer approximation of the solution set. As we could already see in this low dimensional example, the choice of the approximation function greatly influences the size of the computed parameter box as well as the quality of the enclosure of the solution set. A good choice of $\that(s)$ is thus important, and  may require a closer analysis of the problem at hand.

In Fig.~\ref{fig:exInExBox} two solution curves for $x_1$ over $\intval{s}$ are shown together with the approximation tangent $\that^{tan}(s)$ in $(z,p)$ and corresponding inclusion and exclusion regions. The dash-dotted lines represent inclusion (exclusion) regions, which are computed using the true bounds at each parameter value $s$, i.e., with $G_0(s)$ and $A(s)$ from \eqref{eq:obarbnew} and \eqref{eq:BzeroNew}. The dotted lines represent the developement of the inclusion (exclusion) exclusion regions which are calculated using upper bounds over the initial interval $\intval{s}$, i.e., with $\ubar{G_0}$ and $\ubar{A}$ from \eqref{eq:upobarb} and \eqref{eq:upBzero}. The latter ones increase (decrease) much faster and intersect at the boundary of $\widetilde{\intval{s}}$. In Fig.~\ref{fig:exInExBox_comp} these inclusion and exclusion regions as well as a comparison between the respective regions for the tangent and the secant are depicted.
  \begin{figure}[htb]
    \centering
    \subfloat[]{\includegraphics[width=0.5\textwidth ]{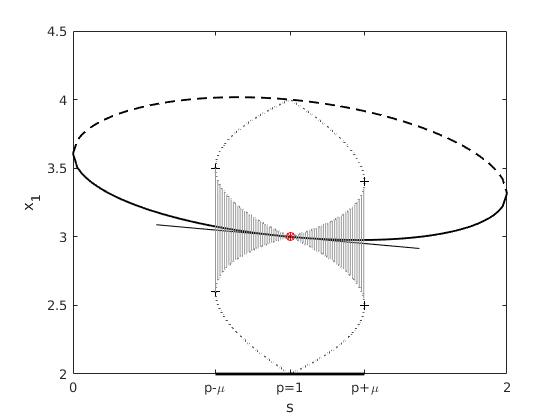}}
    \subfloat[]{\includegraphics[width=0.5\textwidth ]{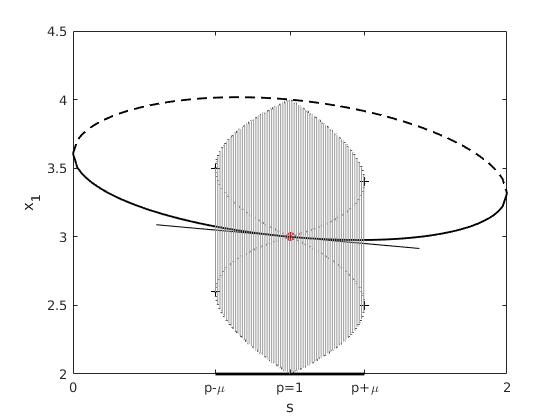}} \\
    \subfloat[]{\includegraphics[width=0.5\textwidth ]{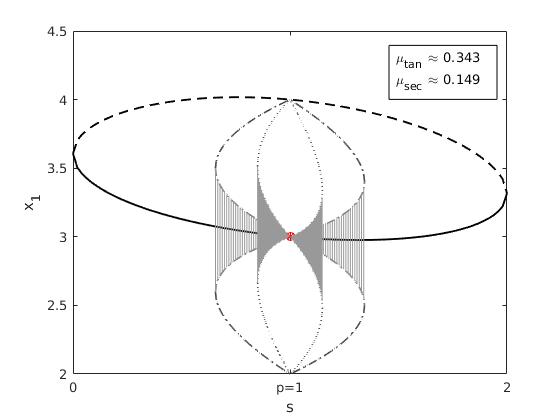}}
    \caption[]{Inclusion (a) and exclusion (b) regions over $\widetilde{\intval{s}}^{tan}\approx[0.657,\,1.343]$ for $\that^{tan}$ (computed with upper bounds from \eqref{eq:upobarb} and \eqref{eq:upBzero}), and (c) in comparison with the respective regions over $\widetilde{\intval{s}}^{sec}\approx[0.851,\,1.149]$ for $\that^{sec}$}
    \label{fig:exInExBox_comp}
  \end{figure}

\section{Future Work}\label{sec:discussion}
The above described method allows to explictely construct feasible areas in the parameter space and to rigorously enclose the solution set of \eqref{eq:H} for all parameters in the computed parameter boxes. The method shows promising applicability for problems in low parameter dimensions. However, the method suffers from some sort of cluster effect when approaching the boundaries of the feasible parameter regions, i.e., the step size $\mu$ and thus the radii of the parameter boxes become smaller and smaller. This problem may be tackled using an extension of Miranda's theorem and is addressed in \cite{ponleitnerSchichl2018mir}.
An application for the new method is for example the workspace-computation of parallel manipulators (see \citet{merlet06}). In particular, the computation of the total orientation workspace requires the solution of a parameter-dependent system of nonlinear equations. Up to the author's knowledge, there are only a few results adressing this problem using rigorous methods (\citet{merlet1998ws, merlet1998}). Therefore, the proposed method will be applied to the workspace problem \cite{ponleitnerSchichl2018ws}.
 
  \clearpage
  \small
  \renewcommand\bibname{References}
  \bibliographystyle{plainnatnew1}
  \bibliography{paperExclusionRegions}
\end{document}